\def\<{\langle}
\def\>{\rangle}
\newcommand{\SmallMatrix}[1]{{%
  \tiny\arraycolsep=0.3\arraycolsep\ensuremath{\begin{pmatrix}#1\end{pmatrix}}}}
\newcommand{\rk}{\mathrm{rank}}  
\renewcommand{\tr}{\mathrm{tr}} 
\newcommand{\Mnc}{\mathop{M_n}\left( \mathbb{C}\right)} 
\newcommand{\Mnsc}{\mathop{M_{n^2}\left( \mathbb{C} \right)}} 
\newcommand{\I}{\mathop{\mathbb{I}}\nolimits}
\newcommand{\R}{\mathop{\mathbb{R}}\nolimits}
\newcommand{\C}{\mathop{\mathbb{C}}\nolimits}
\renewcommand{\ketbra}[2]{| #1 \rangle \langle #2 | }
\renewcommand{\ket}[1]{| #1 \rangle} 
\newcommand{\cc}[1]{\overline{#1}} 
\renewcommand{\Re}{\mathop{\mathrm{Re}}} 
\newcommand{\f}[1]{\left| \left| \ #1 \ \right|\right|_{\mathbf{\mathrm{F}}}} 
\newcommand{\ft}[1]{\left| \left| \ #1 \  \right| \right|_{(2),2}} 
\renewcommand{\norm}[1]{\left| \left| #1 \right| \right|} 
\renewcommand{\op}[1]{\left| \left| #1 \right| \right|_{\mathrm{op}}} 
\let\sup\relax
\DeclareMathOperator*{\sup}{sup}
\newtheorem{Prop}{Proposition}
\newtheorem{Remark}{\it Remark}
\newtheorem{Conj}{Conjecture}
\newcommand{\Ah}{s_1}
\newcommand{\Ash}{s_2}
\begin{document}

\begin{frontmatter}

\title{Bounding the Frobenius norm of a $q$-deformed commutator}

\author[1]{Dariusz Chru\'sci\'nski}
\ead{darch@fizyka.umk.pl}
\author[2]{Gen Kimura}
\ead{gen@shibaura-it.ac.jp}
\author[3]{Hiromichi Ohno}
\ead{h\_ohno@shinshu-u.ac.jp}
\author[1]{Tanmay Singal \corref{C}}
\ead{tanmaysingal@gmail.com}

\address[1]{Institute of Physics, Faculty of Physics, Astronomy and Informatics  Nicolaus Copernicus University, Grudzi\c{a}dzka 5/7, 87--100 Toru\'n, Poland}
\address[2]{College of Systems Engineering and Science,
Shibaura Institute of Technology, Saitama 330-8570, Japan}
\address[3]{Department of Mathematics, Faculty of Engineering, Shinshu University,
4-17-1 Wakasato, Nagano 380-8553, Japan.}
\cortext[C]{Corresponding author}

\date{}
\begin{abstract} \noindent
For two $n \times n$ complex matrices $A$ and $B$, we define the $q$-deformed commutator as $[ \ A, B \ ]_q \ \coloneqq \ A B - q BA$ for a real parameter $q$. In this paper, we investigate a generalization of the B\"{o}ttcher-Wenzel inequality which gives the sharp upper bound of the (Frobenius) norm of the commutator. In our generalisation, we investigate sharp upper bounds on the $q$-deformed commutator. This generalization can be studied in two different scenarios: firstly bounds for general matrices, and secondly for traceless matrices. For both scenarios, partial answers and conjectures are given for positive and negative $q$. In particular, denoting the Frobenius norm by $\f{.}$, when either $A$ or $B$ is normal, we prove the following inequality to be true and sharp:  $\f{[ \ A , B \ ]_q}^2 \le \left(1+q^2 \right) \f{A}^{{2}}\f{B}^{{2}}$ for positive $q$. Also, we conjecture that the same bound is true for positive $q$ when either $A$ or $B$ is traceless. For negative $q$, we  conjecture other sharp upper bounds to be true for the generic scenarios and the scenario when either of $A$ or $B$ is traceless. All conjectures are supported with numerics and proved for $n=2$. 
\end{abstract}
\begin{keyword}
 B\"ottcher-Wenzel inequality \sep
    deformed commutator \sep 
    Frobenius norm
\MSC[2010] 15A45, 15B57, 53C42 
\end{keyword}

\end{frontmatter}

\section{Introduction}

In \cite{BW2005}, B\"{o}ttcher and Wenzel conjectured the following sharp upper bound to the Frobenius norm of the commutator of two $n \times n$ complex matrices $A$ and $B$:
\begin{equation}
    \label{eq:BW}
    \f{ \ AB \ - \ BA \ }^2 \ \le \ 2 \f{A}^2 \f{B}^2. 
\end{equation}
In subsequent years many different proofs of the conjecture were given: L\'{a}sl\'{o} proved the case for $3 \times 3$ real matrices \cite{Laslo}, Lu \cite{Lu1} and, independently, Wong and Jin \cite{Jin} proved it for $n \times n$ real matrices, after which the complex case was proved by B\"{o}ttcher and Wenzel in \cite{BW2008}. In \cite{Audenaert}, Audenaert gave a simple proof of the conjecture. In particular, Audenaert proved a stronger bound using the Ky Fan $(2),2$ norm. The Ky Fan $(2),2$ norm of $A$ is defined as \cite{Horn} $\ft{A} \coloneqq \sqrt{ \Ah^2 + \Ash^2 }$, where $\Ah$ and $\Ash$ are the largest and second largest singular values of $A$. See also \cite{Wenzel} and \cite{Lu2} for other proofs of the original 
{B\"ottcher-Wenzel} (BW) inequality. Since the proof of the original inequality, the result has been generalised using other norms (notably among them, the Schatten-$p$ norm) of the commutators and other cases \cite{WA, Wu, Fong, Cheng5, Cheng4, Cheng3, Gil, Xie, Kan, LW, Ge1,Cheng2, Cheng1, Liu}. See \cite{Cheng4} and \cite{LW} for a review of the history of this problem. 

In this work we deform the commutator of $A, B \in M_n(\mathbb{C})$,
 \begin{equation}
    \label{eq:q}
       [ \ A, \ B \ ]_q  \ \coloneqq \ AB - q BA , \ \ \ q \in \mathbb{R} ,
\end{equation} 
and search for the sharp upper bound on the Frobenius norm of $  [ \ A, \ B \ ]_q$: 
\begin{equation}\label{eq:genBW1}
\f{[ \ A, \ B \ ]_q}^2 \le c(n) \f{A}^2 \f{B}^2, 
\end{equation}
where the optimal bound $c(n)$ can be characterized by 
\begin{equation}\label{eq:optBdd}
c(n) = \sup_{A,B (\neq 0) \in M_n(\mathbb{C})} \frac{\f{[ \ A, \ B \ ]_q}^2}{\f{A}^2 \f{B}^2}. 
\end{equation}

While the standard commutator is ubiquitous in physics and mathematics, deformation of the standard commutation relations have also been studied for quite sometime. With pioneering works of E. K. Sklyanin \cite{Sklyanin82} and independently by P. P. Kulish and Reshetikhin \cite{Kulish_Reshetikhin} on $SU_q(n)$ ($q$-deformed $SU(n)$), the topic of quantum groups gained significant {traction} after the works of Jimbo \cite{Jimbo}, Drinfeld \cite{Drinfeld} and Woronowicz \cite{Woronowicz87,Woronowicz89}. The topic continues to draw interest, for instance see \cite{Franco_Fagnola} for a recent work on it.
For a physics-friendly introduction to quantum groups we refer the reader to \cite{RJaggi}, whereas to the more mathematically inclined reader we refer \cite{Majid95}. 

Taking inspiration from both, the large body of work on $q$-deformed commutators and also {the} BW conjecture, we were motivated to study the corresponding upper bound  {on} the Frobenius norm of the $q$-deformed commutator. Recently, we found an application of BW inequality to analyze the relaxation rates for the evolution of open quantum systems \cite{RR1,RR2}. 

In generalizing BW inequality, we study two different scenarios: 

[Case 1] The sharp bound on \eqref{eq:genBW1} for when $A$ and $B$ are arbitrary $n \times n$ matrices.

[Case 2] The sharp bound when $A$ or $B$ in \eqref{eq:genBW1} are traceless matrices. Note here that, in the original BW inequality, only the traceless parts of $A$ and $B$ are essential.  \newline

\noindent Although this project is not yet complete, we find several interesting partial answers and we put forth several conjectures which we support by numerics. These conjectures are given below.

[Case 1-1] For negative $q \le 0$, we have the sharp bound: 
\begin{equation}\label{eq:Neg}
\f{[A,B]_q}^2\leq\left(1-q\right)^2 \f{A}^2\f{B}^2. 
\end{equation}
In particular, the sharp bound for anti-commutator ($q=-1$) is given by 
\begin{equation}
\f{AB +BA}^2\leq 4 \f{A}^2\f{B}^2. 
\end{equation}
Note that while Eq. \eqref{eq:Neg} is trivial, it is sharp. 

[Case 1-2] For $q>0$, while we weren't able to find the optimal sharp bound, we prove that, if either $A$ or $B$ is normal, the sharp bound is given by  
\begin{equation}
\label{eq:q_commutator_BW}
\f{[A,B]_q}^2\leq\left(1+q^2 \right)\f{A}^2\f{B}^2.
\end{equation}  
Indeed, we prove the following tighter inequality: if $A$ is normal, 
\begin{equation}
\label{eq:q_commutator_1}
\f{[A,B]_q}^2\leq\left(1+q^2 \right)\ft{A}^2\f{B}^2. 
\end{equation}  
We find that, however \eqref{eq:q_commutator_BW} can be  violated if neither $A$ nor $B$ are normal. 

We find that imposing tracelessness on either $A$ or $B$ changes the sharp upper bounds compared to the generic case.

[Case 2-1] For $q \le 0$, we conjecture that the sharp bound depends on $n$ the following way:
\begin{equation*}\label{eq:TLBdd_introd}
{\f{[A,B]_q}^2}  \ \leq \ \max[g(n)(1-q)^2, 1+q^2]  \ \f{A}^2 \ \f{B}^2,
\end{equation*} 
where \[g(n)  \ =  \ \frac{n^2-3n+3}{n(n-1)}.  \] When $n=2,3$, this agrees with the bound in Eq. \eqref{eq:genBW1}, but this isn't true for $n\ge4$.

[Case 2-2] For $q > 0$: we conjecture that Eq. \eqref{eq:q_commutator_BW} gives the sharp upper bound. 

All these conjectures are supported by numerics and are proved for $n=2$. 

\bigskip 


\subsection*{Notation}
For $a \in \C$, $\cc{a}$ denotes the complex conjugate of $a$. 
For a complex matrix $A = \left( a_{ij} \right) \in M_n \left( \C \right)$, we denote by $A^\dagger, A^T$ and $\tr (A)$, the adjoint (conjugate transpose), the transpose and the trace of $A$ respectively.  
In the following, we use three matrix norms for $A \in M_n \left( \C \right)$, the operator norm (or spectral norm) $\op{A}$, the Frobenius norm $\f{A}= \sqrt{\tr{AA^\dag}}$, and the Ky Fan $(2),2$ norm $\ft{A}$ \cite{Horn}
. Denoting the singular values of $A$ by $s_1 \ge s_2 \ge \cdots \ge s_n $ arranged in descending order, one has $\op{A}=\Ah$, $\f{A}= \sqrt{\sum_{i=1}^n s_i^2}$, and $\ft{A} = \sqrt{ \Ah^2 \ + \ \Ash^2 }$. Thus, $\op{A} \le \ft{A} \le \f{A}$. Any vector in $\R^3$ will be denoted by bold letters $\textbf{r}$, $\textbf{s}$, etc. $\sigma_1$, $\sigma_2$ and $\sigma_3$ denote the well-known $2 \times 2$ Pauli matrices. For $\textbf{r} \in \R^3$, $\textbf{r}\cdot\boldsymbol{\sigma}$ denotes $\sum_{i=1}^3 r_i \sigma_i$. At times, we shall use the Dirac notation: A complex vector $(x_i)_{i=1}^m$ in $\C^m$ is denoted by $\ket{x}$ where $\langle x| y \rangle = \sum_i \cc{x_{i}}y_i $ is the inner product while $|x\rangle \langle y |$ is the matrix with entries $(x_i \cc{y_j})$. The vector norm is given and denoted by $\norm{\ \ket{x} \ } = \sqrt{\langle x| x \rangle}$. We also use the shorthand notation $|x\rangle | y \rangle$ for the tensor product of $|x\rangle$ and $| y \rangle$. 

\section{General bounds}\label{sec:GBdd}


In this section, we consider the general bound \eqref{eq:genBW1}. 
Note when $q \le 0$ and $A$ and $B$ are arbitrary,  the bound \eqref{eq:Neg} can beeasily verified by applying the triangle inequality and the submultiplicativity of the Frobenius norm $\f{ AB } \le \f{A} \f{B}$:
\begin{equation}
\label{eq:minus_q}
\f{ [A,B]_q } \le \f{ AB } -q \f{ BA } \le (1-q) \f{A} \f{B} .    
\end{equation}
Note that this upper bound is attained when $A=B$ is a $\mathrm{rank}$ one orthogonal projector, hence 
the bound \eqref{eq:Neg} is sharp. 

For $q >0 $, we first consider the case when either $A$ or $B$ is normal.
\begin{Prop}
\label{prop:1}
Let $A$ be normal in $\Mnc$. Then for $q \ge 0$
\begin{equation*}
{\f{[A,B]_q}^2}  \ \leq \ \left( 1+q^2 \right) \ \ft{A}^2 \ \f{B}^2,
\end{equation*}
where the bound is sharp.   
\end{Prop}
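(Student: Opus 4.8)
The plan is to combine the spectral theorem with the unitary invariance of both $\f{\cdot}$ and $\ft{\cdot}$. Since $A$ is normal, write $A=U\Lambda U^\dagger$ with $U$ unitary and $\Lambda=\mathrm{diag}(\lambda_1,\dots,\lambda_n)$, ordering the eigenvalues so that $|\lambda_1|\ge|\lambda_2|\ge\dots\ge|\lambda_n|$; for a normal matrix these moduli are precisely the singular values, so $\ft{A}^2=|\lambda_1|^2+|\lambda_2|^2$. A one-line computation gives $U^\dagger\,[A,B]_q\,U=[\Lambda,\,U^\dagger BU]_q$, and since $\f{U^\dagger BU}=\f{B}$ while $U^\dagger BU$ ranges over all of $\Mnc$ as $B$ does, it suffices to prove the bound with $A$ replaced by the diagonal matrix $\Lambda$.

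With $A=\Lambda$ diagonal I would pass to matrix entries: writing $B=(b_{ij})$, the $(i,j)$ entry of $\Lambda B-qB\Lambda$ equals $(\lambda_i-q\lambda_j)\,b_{ij}$, so
\begin{equation*}
\f{[\Lambda,B]_q}^2=\sum_{i,j=1}^{n}|\lambda_i-q\lambda_j|^2\,|b_{ij}|^2 .
\end{equation*}
Since $\f{B}^2=\sum_{i,j}|b_{ij}|^2$, the proposition reduces to the scalar inequality
\begin{equation*}
|\lambda_i-q\lambda_j|^2\ \le\ (1+q^2)\bigl(|\lambda_1|^2+|\lambda_2|^2\bigr)\qquad\text{for all }i,j .
\end{equation*}

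This scalar inequality is the heart of the matter and is the only place the hypothesis $q\ge 0$ enters; I would prove it by distinguishing two cases. If $i\ne j$, then $|\lambda_i-q\lambda_j|\le|\lambda_i|+q|\lambda_j|$ (using $q\ge0$), and Cauchy--Schwarz gives $(|\lambda_i|+q|\lambda_j|)^2\le(1+q^2)(|\lambda_i|^2+|\lambda_j|^2)\le(1+q^2)(|\lambda_1|^2+|\lambda_2|^2)$, the last step because $i,j$ are two distinct indices. If $i=j$, then $|\lambda_i-q\lambda_i|^2=(1-q)^2|\lambda_i|^2\le(1+q^2)|\lambda_1|^2\le(1+q^2)(|\lambda_1|^2+|\lambda_2|^2)$, using $(1-q)^2\le 1+q^2$ (valid exactly because $q\ge0$) together with $|\lambda_i|\le|\lambda_1|$. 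Multiplying by $|b_{ij}|^2$ and summing over $i,j$ gives the claimed bound. I do not expect a genuine obstacle; the only point requiring care is the diagonal term $i=j$, where the Cauchy--Schwarz estimate used for $i\ne j$ is too lossy and one must instead expand $(1-q)^2|\lambda_i|^2$ directly.

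For sharpness (with $n\ge2$) I would take $A=\mathrm{diag}(1,-q,0,\dots,0)$, whose two largest singular values are $1$ and $q$, so $\ft{A}^2=1+q^2$, and $B=E_{12}$, the matrix unit with single nonzero entry $(E_{12})_{12}=1$, so $\f{B}=1$. Then $[A,B]_q=AB-qBA=E_{12}-q(-q)E_{12}=(1+q^2)E_{12}$, hence $\f{[A,B]_q}^2=(1+q^2)^2=(1+q^2)\,\ft{A}^2\,\f{B}^2$, so the constant $1+q^2$ cannot be lowered.
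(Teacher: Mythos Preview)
Your proof is correct and follows essentially the same route as the paper: diagonalise the normal matrix $A$ by unitary invariance, compute $\f{[\Lambda,B]_q}^2=\sum_{i,j}|\lambda_i-q\lambda_j|^2|b_{ij}|^2$, and reduce to the scalar bound $|\lambda_i-q\lambda_j|^2\le(1+q^2)\ft{A}^2$, treating the diagonal case $i=j$ separately via $(1-q)^2\le 1+q^2$; the sharpness witness $A=\mathrm{diag}(1,-q,0,\dots,0)$, $B=E_{12}$ is identical to the paper's. The only cosmetic difference is that for $i\ne j$ you invoke the triangle inequality followed by Cauchy--Schwarz, whereas the paper expands $|\lambda_i-q\lambda_j|^2$ and bounds the cross term using $|\lambda_j+q\lambda_i|^2\ge 0$; these yield the same estimate.
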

\begin{proof}  Let $A$ be a normal matrix. Since $\f{ [A,B]_q}^2$ is unitarily invariant we can choose a basis where $A$ is a diagonal matrix $A = {\rm diag}[a_1,\ldots,a_n]$. Let $(b_{ij})$ be the corresponding matrix elements of $B$. A direct computation shows 
$$
\f{[A,B]_q}^2 = \sum_{ij} |b_{ij}|^2 \left(|a_i|^2 + q^2 |a_j|^2 - 2q \Re \overline{a_j}a_i \right) .
$$
Since ${|a_j + q a_i|^2 \geq 0 }$, the following is true for $q\ge0$:

$$ -2q \Re \overline{a_j}a_i \  \le \  \left(1-\delta_{ij}\right) \left(  |a_j|^2 + q^2 |a_i|^2 \right), $$ 
and hence
\begin{align*}  |a_i|^2  +   q^2 |a_j|^2  -   2q \Re \overline{a_j}a_i \   \le    \ \left(   1+q^2  \right)\left(   | a_i|^2 + \left(1-\delta_{ij}\right) |a_j|^2   \right) \ \le  \ \left(1+q^2 \right)  \ft{A}^2,  \end{align*} which proves the result as $\f{B}^2 =  \sum_{ij} |b_{ij}|^2$. The bound is attained e.g. by matrices $A = {\rm diag}[1,-q,0,\ldots,0]$ and $B = (b_{ij})$ with $b_{ij} = 0$ except for $b_{12} = 1$. 
\end{proof}

Clearly, if $B$ is normal, then 

\begin{equation*}
 { \f{[A,B]_q}^2} \ \leq \ \left( 1+q^2 \right) \ \f{A}^2 \ \ft{B}^2 \ 
\end{equation*}
Hence, one obtains: 

\begin{Prop}
For  $q > 0$, if either $A$ or $B$ is normal, 
\begin{equation*}
{\f{[A,B]_q}^2}  \ \leq \ \left( 1+q^2 \right) \ \f{A}^2 \ \f{B}^2,  
\end{equation*}
where the bound is sharp.   
\end{Prop}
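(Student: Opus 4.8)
The plan is to deduce this statement directly from Proposition~\ref{prop:1} (and its companion for $B$ normal), using nothing more than the norm ordering $\ft{X}\le\f{X}$ recalled in the Notation section. There is essentially no new content to prove here; the work is purely bookkeeping, and the point of the proposition is just to repackage the two preceding estimates into a single clean, symmetric statement.

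First I would handle the case $A$ normal. Proposition~\ref{prop:1} already supplies the sharper estimate $\f{[A,B]_q}^2\le(1+q^2)\ft{A}^2\f{B}^2$ for $q\ge0$, so weakening $\ft{A}$ to $\f{A}$ via $\ft{A}\le\f{A}$ yields the claimed inequality. For the case $B$ normal I would first record the identity $(AB-qBA)^\dagger=B^\dagger A^\dagger-qA^\dagger B^\dagger=[B^\dagger,A^\dagger]_q$, so that $\f{[A,B]_q}=\f{[B^\dagger,A^\dagger]_q}$; since $B^\dagger$ is normal whenever $B$ is, applying Proposition~\ref{prop:1} to the pair $(B^\dagger,A^\dagger)$ and using $\ft{B^\dagger}=\ft{B}\le\f{B}$ together with $\f{A^\dagger}=\f{A}$ gives the bound. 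This reproduces exactly the displayed companion inequality stated immediately before the proposition.

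For sharpness I would recycle the extremal pair from the proof of Proposition~\ref{prop:1}: take $A=\mathrm{diag}[1,-q,0,\ldots,0]$, which is normal, together with $B=(b_{ij})$ having $b_{12}=1$ and every other entry zero. The computation in that proof gives $\f{[A,B]_q}^2=(1+q^2)^2$, while $\f{A}^2=1+q^2$ and $\f{B}^2=1$, so the right-hand side also equals $(1+q^2)^2$ and equality holds.

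The only point that needs a moment's attention — and the closest thing to an obstacle — is verifying that this witness saturates the \emph{Frobenius} bound rather than merely the a priori tighter Ky Fan bound of Proposition~\ref{prop:1}. This is automatic because the chosen $A$ has rank two: its two nonzero singular values already exhaust its Frobenius norm, so $\ft{A}^2=\f{A}^2$ for this particular $A$ and the two bounds coincide on it.
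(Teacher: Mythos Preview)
Your proposal is correct and follows essentially the same approach as the paper: the paper simply records the companion inequality for $B$ normal (``Clearly, if $B$ is normal, then $\f{[A,B]_q}^2 \le (1+q^2)\f{A}^2\ft{B}^2$'') and then passes to the Frobenius norm via $\ft{\cdot}\le\f{\cdot}$, with sharpness inherited from the extremal pair in Proposition~\ref{prop:1}. Your adjoint identity $[A,B]_q^\dagger=[B^\dagger,A^\dagger]_q$ is a clean way to justify the ``clearly'' step, and your remark that the witness $A$ has rank two (so $\ft{A}=\f{A}$) makes explicit why sharpness survives the weakening; both are details the paper leaves implicit.
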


Setting $q=1$ in the above inequality, we recover the BW inequality, which is true even when neither $A$ nor $B$ are normal. However, the bound in Eq.~\eqref{eq:q_commutator_BW} can be violated when $A$ and $B$ are generic when $q \neq 1$ but greater than $0$. A simple counter example for $q=2$ is given e.g. by $A = 
\begin{pmatrix}
 2 & 8 \\
 0 & -1 \\
\end{pmatrix}, B = 
\begin{pmatrix}
 2 & 0 \\
 -8 & -1 \\
\end{pmatrix}
$ a direct computation gives $\f{A}^2 = \f{B}^2 = 69$, and $\| [A,B]_2 \|^2 = 23953$, which is strictly greater than the bound $(1+2^2) \f{A}^2 \f{B}^2 = 23805$. One may elaborate this example as follows: 
 let
\begin{equation}
A = \begin{pmatrix} q & \sqrt{t} \\ 0 & -1 \end{pmatrix}, \quad 
B = \begin{pmatrix} q & 0 \\ -\sqrt{t} & -1 \end{pmatrix},
\end{equation}
\noindent where $t\ge 0$. One has
\[
[A, B]_q 
= \begin{pmatrix} q^2(1-q)-t & -(1+q^2) \sqrt{t} \\ (1+q^2) \sqrt{t} & qt +1-q \end{pmatrix}
\]
\noindent and hence
\[
\| [A,B]_q \|^2 = t^2 (1+q^2) +2t(1+q)(1+q^3) + (1-q)^2(1+q^4).
\]
\noindent Define a function $f(t)$ by
\[
f(t) = \frac{\f{ [A,B]_q }^2}{\f{A}^2 \f{B}^2}
=\frac{t^2 (1+q^2) +2t(1+q)(1+q^3) + (1-q)^2(1+q^4)}{(t+q^2+1)^2}.
\]
We remark, on the side, that when $t \to \infty$, we obtain $f(t) \to 1+q^2$.
\noindent The numerator $f'(t)$ is  {a quadratic polynomial in $t$ of the following form:}
 {\[-2q\left(1-q\right)^2\left(t+q^2+1\right)\left(t-t_\mathrm{max}\right), \ \mathrm{where} \  t_\mathrm{max} \ = \ \frac{3q^4 + 2q^2 +3}{(1-q)^2}.
\] }
\noindent  {For $t\ge 0 $}, $f$ acquires its  {maximum} when $t=  t_\mathrm{max}$, and $f(t_\mathrm{max})$ is given by 
\noindent,
\begin{equation}\label{eq:fmax}
f\left(t_\mathrm{max}\right) \ = \ \frac{1 - h(q)x(q) }{1-x(q)} \ \left(1+q^2\right),
\end{equation}
where \[ x(q) \ :=  \ \dfrac{q\left(1-q\right)^2}{2\left(1+q^4\right)}, \ \mathrm{and} \ h(q) \ := \ \dfrac{\left(1+q\right)^2}{2\left(1+q^2\right)}.\]
When $q >0$ and $q \neq 1$, $0 < x(q),h(q) < 1$. Thus $\frac{1-h(q)x(q)}{1-x(q)} > 1$. Thus $f(t_\mathrm{max}) > \left( 1+q^2 \right)$. Furthermore,  $f'(t) < 0$ for $t > t_\mathrm{max}$, and since $f(t) \to 1+q^2$ when $t \to \infty$, $f(t) > 1+q^2$ for $t \ge t_{\mathrm{max}}$.

In order to find the best bound, we performed a numerical optimization for Eq. \eqref{eq:optBdd}, i.e., we numerically maximized $\f{ [A,B]_q}^2/\left(\f{A}^2 \f{B}^2\right)$. Fig.~\ref{fig:genBdd} shows the plots for $n=2,3,4$. 
For negative $q$, the numerical maximum (denoted by red points) lies on the curve $(1-q)^2$ (plotted with the dotted line), and this observation agrees with the proof in Eq. \eqref{eq:minus_q}. However, for positive $q$, one observes that the numerical maximum slightly exceeds the bound in Eq. \eqref{eq:q_commutator_BW} (plotted with the solid line) (a magnification has been attached to the figure for $n=2$ to enable the reader to see this clearly; note in passing that while $f(t_{\max})$ in Eq. \eqref{eq:fmax} exceeds the bound, it still doesn't achieve the numerical maximum.). We performed numerical maximzation of the bound till $n=10$, and in these optimizations, the numerics suggests that the upper bound doesn't depend on $n$.

\begin{figure}[th]
\includegraphics[width=\textwidth]{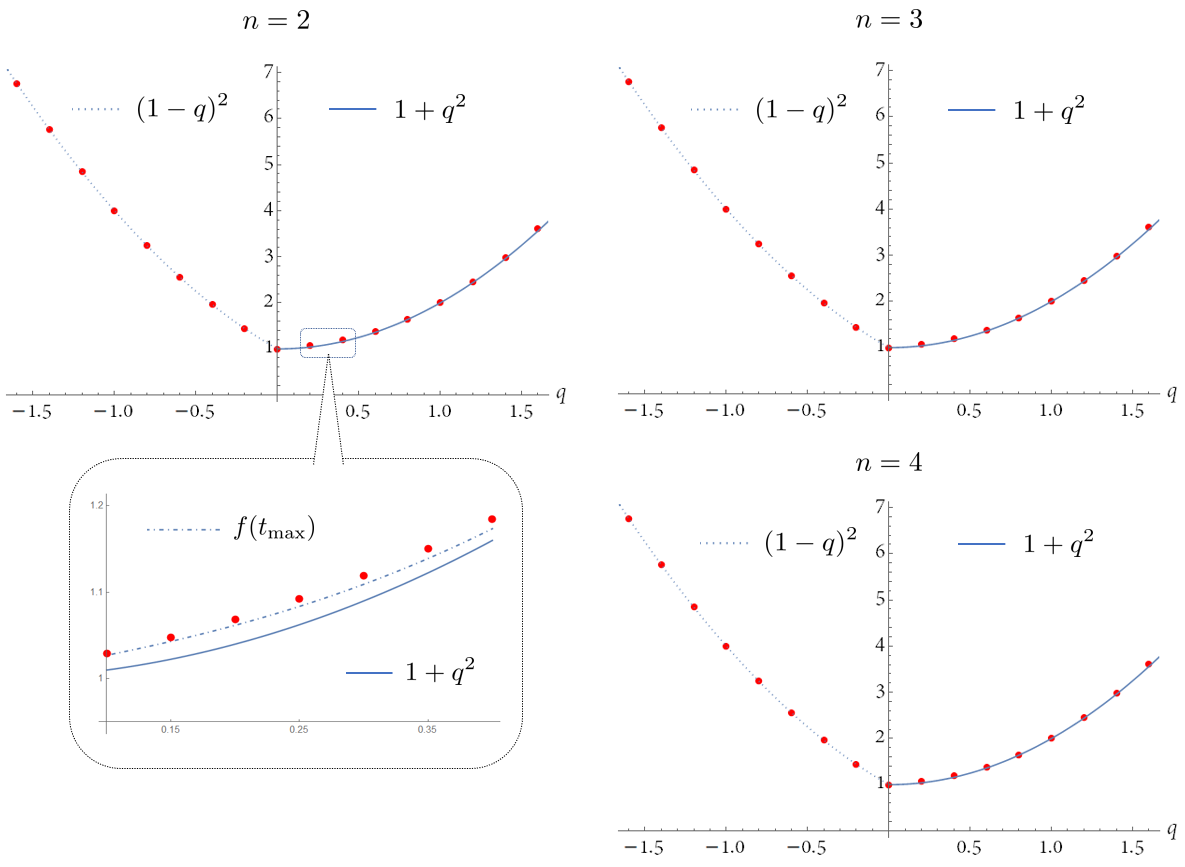}
\caption{
Numerical optimization of the bound for $n=2,3,4$. Reds points are plots of the numerical maximization of $\f{ [A,B]_q}^2/\left(\f{A}^2 \f{B}^2\right)$. 
}\label{fig:genBdd}
\end{figure}

\section{Bound for traceless matrices}\label{sec:TLBdd}

In this section, we consider the bound  \eqref{eq:genBW1} when either $A$ or $B$ is constrained to be traceless. For positive $q$, we have the following conjecture: 
\begin{Conj}
\label{CON}\label{con:TLpos}
For any $q > 0$, if $A$ or $B$ is traceless, the inequality \eqref{eq:q_commutator_BW} holds and is sharp.   
\end{Conj}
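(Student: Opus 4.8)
Although Conjecture~\ref{con:TLpos} is open for $n\ge 3$, we outline a line of attack that reduces it to a single scalar inequality in which the tracelessness hypothesis plays the decisive role. First, since $[A,B]_q^\dagger=[B^\dagger,A^\dagger]_q$, one has $\f{[A,B]_q}=\f{[B^\dagger,A^\dagger]_q}$, and $B^\dagger$ is traceless whenever $B$ is; hence it suffices to treat the case $\tr A=0$. (The identity $[A,B]_q=-q\,[B,A]_{1/q}$ for $q\neq 0$, combined with the previous symmetry, moreover restricts attention to $0<q\le 1$.) A direct expansion of $\tr\big([A,B]_q^\dagger[A,B]_q\big)$ yields
\begin{equation}\label{eq:quadid}
\f{[A,B]_q}^2 \ = \ \f{AB}^2 + q^2\f{BA}^2 - 2q\,\Re\tr(A^\dagger B^\dagger AB),
\end{equation}
so $q\mapsto\f{[A,B]_q}^2$ is an upward parabola, and the conjecture is the assertion that $P(q):=(1+q^2)\f{A}^2\f{B}^2-\f{[A,B]_q}^2\ge 0$ for all $q\ge 0$, where
\[
P(q) \ = \ \big(\f{A}^2\f{B}^2-\f{BA}^2\big)\,q^2 \ + \ 2\,\Re\tr(A^\dagger B^\dagger AB)\,q \ + \ \big(\f{A}^2\f{B}^2-\f{AB}^2\big).
\]
The leading and constant coefficients of $P$ are nonnegative because $\f{XY}\le\op{X}\f{Y}\le\f{X}\f{Y}$.

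If $\Re\tr(A^\dagger B^\dagger AB)\ge 0$, all three coefficients are nonnegative and $P(q)\ge 0$ for $q\ge 0$ with no further work: this is the easy half, needing no tracelessness. If $\Re\tr(A^\dagger B^\dagger AB)<0$ and $\f{BA}<\f{A}\f{B}$, the minimum of $P$ on $[0,\infty)$ is attained at its positive vertex, and nonnegativity there is precisely the discriminant inequality
\begin{equation}\label{eq:star}
\big(\Re\tr(A^\dagger B^\dagger AB)\big)^2 \ \le \ \big(\f{A}^2\f{B}^2-\f{AB}^2\big)\big(\f{A}^2\f{B}^2-\f{BA}^2\big).
\end{equation}
Thus, modulo the degenerate case $\f{BA}=\f{A}\f{B}$ --- in which $P$ is affine, and on $[0,1]$ its endpoint values are $P(0)\ge 0$ and $P(1)\ge 0$, the latter being the B\"ottcher--Wenzel inequality \eqref{eq:BW}, after which one invokes the $q\mapsto 1/q$ reduction --- the conjecture for a fixed pair $(A,B)$ with $\tr A=0$ is equivalent to \eqref{eq:star}.

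The tracelessness is essential in \eqref{eq:star}: the generic-matrix counterexamples to \eqref{eq:q_commutator_BW} are exactly pairs for which $P$ becomes negative on $q>0$, so the unconstrained analogue of \eqref{eq:star} is false; one checks likewise that the conjectural extremisers of Case 2-1 violate the unconstrained \eqref{eq:star} for $n\ge 4$, though only where $\Re\tr(A^\dagger B^\dagger AB)>0$, consistently with the reduction above. For $n=2$ the argument can be carried out in closed form: writing $A=\mathbf{a}\cdot\boldsymbol{\sigma}$ (traceless) and $B=b_0\,\oper+\mathbf{b}\cdot\boldsymbol{\sigma}$ with $\mathbf{a},\mathbf{b}\in\C^3$, each term of \eqref{eq:star} becomes an explicit polynomial in $b_0,\mathbf{a},\mathbf{b}$ and \eqref{eq:star} collapses to an elementary inequality; the same parametrisation, together with the family obtained from the extremiser after Proposition~\ref{prop:1} by padding $A$ with a vanishing traceless diagonal (for which $\f{[A,B]_q}^2/(\f{A}^2\f{B}^2)\to 1+q^2$), establishes sharpness.

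The hard part, and the reason the statement is left as a conjecture for $n\ge 3$, is \eqref{eq:star} in full generality. The natural route is to bring the traceless matrix $A$ to a normal form --- its singular value decomposition, or a Schur triangular form --- and, following the reductions in the Lu, Wong--Jin and Audenaert \cite{Audenaert} proofs of \eqref{eq:BW}, to cut the estimate down to a small number of $2\times 2$ ``active blocks'' of $B$ coupled through the spectrum of $A$; the constraint $\tr A=0$ then enters as a single linear relation that excludes the extremal configuration of the unconstrained problem. An alternative is to read $-\Re\tr(A^\dagger B^\dagger AB)$ as minus the real part of the Frobenius inner product of $BA$ and $AB$ and to bound it by Cauchy--Schwarz against the positive ``defect'' operators that realise $\f{A}^2\f{B}^2-\f{AB}^2$ and $\f{A}^2\f{B}^2-\f{BA}^2$, the point being that $\tr A=0$ suppresses the relevant overlap. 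I expect the combinatorics of which off-diagonal entries of $B$ interact through the traceless spectrum of $A$ to be the principal technical obstacle.
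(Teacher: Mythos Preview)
The statement is a \emph{conjecture} in the paper, not a theorem: the paper proves only $n=2$ (Proposition~\ref{prop:n_2}, via two direct computations) and otherwise offers numerical evidence. Your proposal is likewise not a proof but a reformulation, and you say so. The substantive content is your reduction of the full statement (for all $q>0$) to the single $q$-free discriminant inequality
\[
\big(\Re\tr(A^\dagger B^\dagger AB)\big)^2 \ \le \ \big(\f{A}^2\f{B}^2-\f{AB}^2\big)\big(\f{A}^2\f{B}^2-\f{BA}^2\big),\qquad \tr A=0,
\]
and this reduction is correct: the adjoint and $q\mapsto 1/q$ symmetries are valid, the quadratic expansion of $P(q)$ is right, the leading and constant coefficients are nonnegative by submultiplicativity, the ``easy half'' $\Re\tr(A^\dagger B^\dagger AB)\ge 0$ indeed needs no tracelessness, and the degenerate case $\f{BA}=\f{A}\,\f{B}$ is disposed of by B\"ottcher--Wenzel at $q=1$ combined with the $q\mapsto 1/q$ reduction. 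The paper makes no such $q$-independent reformulation, so this is a genuinely different and potentially useful angle on the conjecture; what it buys is that the dependence on $q$ has been eliminated entirely, at the cost of a single scalar inequality that still needs a proof for $n\ge 3$.

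Two corrections on the details. First, your sharpness argument is more involved than necessary: the extremiser in Proposition~\ref{prop:1} already has $\tr B=0$ (the paper points this out just after its first proof of Proposition~\ref{prop:n_2}), so the bound is \emph{attained}, not merely approached --- no ``padding with a vanishing traceless diagonal'' or limit is needed. Second, your $n=2$ treatment via $A=\mathbf{a}\cdot\boldsymbol\sigma$, $B=b_0\,\oper+\mathbf{b}\cdot\boldsymbol\sigma$ is only a sketch: you assert that the discriminant inequality ``collapses to an elementary inequality'' without exhibiting it. The paper, by contrast, carries out $n=2$ in full (both a direct expansion and the vectorisation argument), and its Pauli-matrix computation in Remark~1 covers only the subcase where \emph{both} $A$ and $B$ are traceless. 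If your $n=2$ argument is to stand on its own, you must actually display and verify the polynomial inequality, including the $b_0$ terms.

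For $n\ge 3$ neither you nor the paper resolves the conjecture; your closing heuristics (Schur/SVD reductions, Cauchy--Schwarz against defect operators) are plausible directions but contain no concrete step beyond what the paper's numerics already suggest.
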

\noindent We notice again that this conjecture, if true, gives a generalization of BW-inequality since the standard commutator of $A$ and $B$ doesn't depend on their respective traces. 

In the following, we provide a proof of Conjecture \ref{CON} for $n=2$ for any real $q$.
\begin{Prop}
\label{prop:n_2}
For $n=2$ and $\tr{A}=0$, the bound in Eq.~ \eqref{eq:q_commutator_BW} is satisfied and is sharp. 
\end{Prop}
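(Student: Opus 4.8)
The plan is to reduce the $n=2$ problem to a low-dimensional optimization by exploiting the Pauli decomposition and the unitary invariance of the Frobenius norm. Write $A = a_0 \I + \mathbf{a}\cdot\boldsymbol{\sigma}$ and $B = b_0 \I + \mathbf{b}\cdot\boldsymbol{\sigma}$ with $a_0, b_0 \in \C$ and $\mathbf{a}, \mathbf{b} \in \C^3$; the traceless hypothesis $\tr A = 0$ forces $a_0 = 0$. First I would expand $[A,B]_q = AB - qBA$ in this basis. Using $\sigma_i\sigma_j = \delta_{ij}\I + i\epsilon_{ijk}\sigma_k$, the product $AB$ contributes a scalar part $(\mathbf{a}\cdot\mathbf{b})\I$ and a vector part $b_0\mathbf{a} + i(\mathbf{a}\times\mathbf{b})$ (using $a_0=0$), so that
\[
[A,B]_q = (1-q)(\mathbf{a}\cdot\mathbf{b})\,\I + \big((1-q)b_0\,\mathbf{a} + i(1+q)(\mathbf{a}\times\mathbf{b})\big)\cdot\boldsymbol{\sigma}.
\]
Then $\f{[A,B]_q}^2 = 2|(1-q)(\mathbf{a}\cdot\mathbf{b})|^2 + 2\norm{(1-q)b_0\mathbf{a} + i(1+q)(\mathbf{a}\times\mathbf{b})}^2$, while $\f{A}^2 = 2\norm{\mathbf{a}}^2$ and $\f{B}^2 = 2(|b_0|^2 + \norm{\mathbf{b}}^2)$.

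Next I would normalize: by scaling we may set $\norm{\mathbf{a}}^2 = 1$ and $|b_0|^2 + \norm{\mathbf{b}}^2 = 1$, and the target is to show the resulting expression for $\tfrac12\f{[A,B]_q}^2$ is at most $(1+q^2)$ for $q>0$ (and the appropriate $(1-q)^2$-type bound for $q\le 0$, handled by the general argument in Eq.~\eqref{eq:minus_q} since that already covers all matrices; so only $q>0$ genuinely needs work here). The key simplification is that the complex vectors $\mathbf{a}, \mathbf{b} \in \C^3$ can be split into real and imaginary parts, $\mathbf{a} = \mathbf{a}_R + i\mathbf{a}_I$ etc., and after a global phase rotation on $A$ and on $B$ one can assume some reality normalization; more usefully, the cross product and dot product structures mean the problem depends only on a few invariants: the Gram data of the real 2-planes spanned by $\{\mathbf{a}_R,\mathbf{a}_I\}$ and $\{\mathbf{b}_R,\mathbf{b}_I\}$ and their relative position. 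I would parametrize by these invariants (lengths, the angle/overlap between the planes, and $|b_0|$) and write $\tfrac12\f{[A,B]_q}^2$ as an explicit polynomial in them, then maximize.

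The main obstacle I anticipate is the maximization step: even after reduction the objective is a polynomial in perhaps four or five real parameters subject to the two normalization constraints, and one must verify the maximum equals exactly $1+q^2$ and identify where it is attained (the extremizer should be, up to unitary conjugation, something like $A = \sigma_+$, $B = \sigma_-$ rescaled, or the $2\times 2$ witness $A = \mathrm{diag}[1,-1]$, $B = \ketbra{1}{2}$ analog adapted to be traceless). I would handle this either by Lagrange multipliers — checking that at every critical point the value is $\le 1+q^2$ — or, cleaner, by grouping terms to build a sum-of-squares certificate: rewrite $(1+q^2)\f{A}^2\f{B}^2 - \f{[A,B]_q}^2$ as a manifestly nonnegative combination (squared norms of suitable vector combinations of $\mathbf{a},\mathbf{b},b_0$), mirroring how Proposition~\ref{prop:1} used $|a_j + qa_i|^2 \ge 0$. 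If an SOS certificate is found the sharpness follows by exhibiting the vanishing case, completing the proof.
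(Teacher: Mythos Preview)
Your Pauli expansion is correct and your plan is sound in outline, but it is genuinely different from either of the paper's two proofs, and the step you flag as ``the main obstacle'' is precisely where the paper's argument is shorter. The paper does not stay in the Pauli basis once $B$ carries a trace. Its first proof uses a different normal form: a traceless $2\times 2$ matrix is unitarily similar to one with \emph{zero diagonal}, $A=\SmallMatrix{0&a_{12}\\a_{21}&0}$, and in those coordinates the difference $(1+q^2)\f{A}^2\f{B}^2-\f{[A,B]_q}^2$ expands, after a few lines, into an explicit sum of nonnegative terms (squares of the entries), with no optimization required. The second proof uses vectorization, $M_A=\I\otimes A-q\,A^T\otimes\I$, observes that in the same zero-diagonal basis $M_AM_A^\dagger$ splits into two $2\times 2$ blocks, and bounds their eigenvalues directly. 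Your Pauli route is closest to the paper's Remark~1, which handles only the easier sub-case where \emph{both} $A$ and $B$ are traceless; there the cross term you would otherwise face vanishes and the bound drops out of $|\mathbf{a}\cdot\mathbf{b}|^2\le|\mathbf{a}|^2|\mathbf{b}|^2$ and $|\mathbf{a}\times\mathbf{b}|^2\le|\mathbf{a}|^2|\mathbf{b}|^2$.

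Two concrete gaps in your proposal. First, when $b_0\neq 0$ the mixed term $2(1-q^2)\,\Re\!\big[i\,\overline{b_0}\,\langle\mathbf{a},\mathbf{a}\times\mathbf{b}\rangle\big]$ does \emph{not} vanish for complex $\mathbf{a}$, so the ``SOS certificate'' you hope for is not immediate in these variables; you would really have to carry out the multivariable optimization, which you have not done. The zero-diagonal coordinate choice is the missing idea that makes the SOS fall out in two lines. Second, your dismissal of $q\le 0$ via Eq.~\eqref{eq:minus_q} only yields the bound $(1-q)^2$, which is \emph{weaker} than $(1+q^2)$ for $q<0$; the paper's proofs establish the $(1+q^2)$ bound for all $q\in\R$ uniformly, and this full-range validity is used later when discussing the $n=2$ case of Conjecture~\ref{CON:Trnq}.
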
 ~ \newline
\noindent We give two different proofs of Proposition \ref{prop:n_2}. The first proof is customised for the $n=2$ case, and is quite elementary (as one may anticipate). Contrasted against this, the second proof is based on a generic method which is suggestive for proving Eq. ~ \eqref{eq:q_commutator_BW} for more generic cases when $\tr{A}=0$ or $\tr{B}=0$. \newline \noindent  \begin{proof}
\noindent Note that, as is easily shown, a traceless $2 \times 2$ matrix $A$ is unitarily equivalent to a matrix whose diagonal entries are zero  (see e.g., Example 2.2.3 in \cite{Horn2}). Thus, we can assume that $A = 
\SmallMatrix{
0 & a_{12} \\
a_{21} & 0}
$. Let $B = \SmallMatrix{
b_{11} & b_{12} \\
b_{21} & b_{22}}$ in this basis. Then one has 
\begin{eqnarray*}
X &:=& (1+q^2)\f{A}^2\f{B}^2- \f{[A,B]_q}^2 \\
&=& (1+q^2) |a_{12}|^2 |b_{12}|^2 + (1+q^2) |a_{21}|^2 |b_{21}|^2 \\
&& {}  + |a_{12}|^2 (|b_{11}|^2 + q^2 |b_{22}|^2) + |a_{21}|^2 (|b_{22}|^2 + q^2 |b_{11}|^2) \\ 
&& {} + q (|a_{12}|^2+|a_{21}|^2)(b_{11} \cc{b_{22}}+\cc{b_{11}} b_{22}) + 2q(a_{12} \cc{a_{21}} b_{21} \cc{b_{12}} + \cc{a_{12}} a_{21} \cc{b_{21}} b_{12}) \\
&=&  (1\mp q)^2(|a_{12}|^2 |b_{12}|^2+ |a_{21}|^2 |b_{21}|^2) + |a_{12}|^2 |b_{11}+q b_{22}|^2  \\ 
&& {} + |a_{21}|^2 |b_{22}+q b_{11}|^2 \pm 2 q |a_{12} \cc{b_{12}} \pm a_{21} \cc{b_{21}}|^2.
\end{eqnarray*}
This shows $X \ge 0$ for any $q \in {\mathbb R}$. In the example in the proof of Proposition 1, $\tr{B}=0$. Thus it also serves as a proof of the sharpness for the bound in the traceless case.
\end{proof}  ~ \newline Before giving the second proof, let us recall the well known vectorization technique \cite{Laslo,Watrous,Gilchrist}: given $B \in \Mnc$, one defines a vector $|B \> \!\> \in \mathbb{C}^n \otimes \mathbb{C}^n$ via 
\begin{equation}
    \label{eq:def:vec}
B = \sum_{i,j=1}^n \ b_{ij} \ \ketbra{i}{j} \longrightarrow     |B\>\!\> \ = \ \sum_{i,j=1}^n \ b_{ij}  \ {\ket{j}\ket{i}}, \ 
\end{equation}  \noindent Note that $\f{B}^{2} = \<\!\<B|B\>\!\>$. The transformation $B \longrightarrow AB-qBA$ is linear in $B$ and hence there exists $M_A \in \Mnsc$ which implements this transformation in $\C^n \otimes \C^n$, i.e., 
\begin{equation}
    \label{eq:M_0} 
    |AB - q BA\>\!\> \ = \ M_A \ |B\>\!\>,
\end{equation}
where (see \cite{Laslo,Watrous,Gilchrist}) \begin{equation}
    \label{eq:M}
    M_A \ = \ \I \otimes A \ - q \ A^T \otimes \I.
\end{equation} \noindent Hence inequality  \eqref{eq:q_commutator_BW} is equivalent to 
\begin{equation}
    \label{eq:q_commutator_2}
     \left( \  \dfrac{\norm{  \ M_A   |B\>\!\> \  }}{ \norm{ \ |B\>\!\> \  }}  \   \right)^2 \ \le \ \left(1 + q^2 \right) \ \f{A}^2, 
\end{equation}  \noindent Since the above inequality has to be satisfied for any $B$ it implies 

\begin{equation}
    \label{eq:q_commutator_3}
  \op{M_A}^2 \ \leq  \ \left( 1+q^2\right) \ {\f{A}^2}.
\end{equation}
\begin{proof} 
As mentioned in the first proof, one may choose a basis in which the diagonal elements of $A$ are identically $0$:
\begin{equation}
    \label{eq:n_2_A}
    A \ = \ \begin{pmatrix}
    0 & a \\ b & 0 
    \end{pmatrix}.
\end{equation} \noindent The matrix $M_A M_A^\dag $ (see Eq. \eqref{eq:M}) takes the following form. $M_A M_A^\dag = M_1 \oplus M_2$, where 
\begin{align}\label{eq:M_1}  M_1 =  \begin{pmatrix}|a|^2 + q^2 |b|^2& -q (|a|^2 + |b|^2)\\-q(|a|^2 + |b|^2)&|b|^2 + q^2 |a|^2\end{pmatrix},\end{align} and \begin{align} \label{eq:M_2}   M_2 \ = \begin{pmatrix} \left( 1 + q^2 \right)|b|^2  & -2q\cc{a}b  \\ 
 -2qa\cc{b} & \left( 1+ q^2 \right)|a|^2  \end{pmatrix}.\end{align} \noindent The eigenvalues of $M_i$ are given by $\frac{1}{2} \left( \tr{M_i} \pm \sqrt{\Delta_i} \right)$, where the discriminant $\Delta_i$ is given by, $\Delta_i = \left(\tr{M_i}\right)^2-4 \ \mathrm{det}M_i$. If  $ \left| \ 4  \  \mathrm{det}M_i \ \right| \le \left(\tr{M_i}\right)^2 $, then both eigenvalues of $M_i$ are smaller than $\tr{M_i}$. For both $i=1,2$, we see that $\tr{M_i}= \left(1+q^2\right)\left(|a|^2+|b|^2\right)$, and $\mathrm{det}M_i=(1-q^2)^2|a b|^2$. Since $2 |ab| \le |a|^2 + |b|^2 $, we have that $ \left| \ 4  \  \mathrm{det}M_i \ \right| \le \left(\tr{M_i}\right)^2$. Noting that $\f{A}^2=|a|^2 + |b|^2$, the inequality is proved.
\end{proof}

\begin{Remark} Note that if both $A$ and $B$ are traceless, then \eqref{eq:q_commutator_BW} follows {from} the properties of the Lie algebra $su(2)$. Indeed, for $A = \boldsymbol{a}\cdot \boldsymbol\sigma$ and $B = \boldsymbol{b}\cdot \boldsymbol\sigma$ with $\boldsymbol{a},\boldsymbol{b} \in \C^3$ one easily computes \begin{equation}\label{}
  [A,B]_q =(1-q) \boldsymbol{a} {\cdot} \boldsymbol{b}\, \I + i (1+q) (\boldsymbol{a} \times \boldsymbol{b}) \cdot \boldsymbol\sigma  ,
\end{equation}
and hence \begin{equation}\label{}
 \f{ [A,B]_q }^2 
   = 2 \Big( (1-q)^2 |\boldsymbol{a} {\cdot} \boldsymbol{b}|^2 + (1+q)^2 | \boldsymbol{a} \times \boldsymbol{b}|^2 \Big) ,
\end{equation} \noindent where we have used  

$$ {\rm Tr}[ ( \boldsymbol{a} \cdot \boldsymbol\sigma)(\boldsymbol{b} \cdot \boldsymbol\sigma )] =2 \boldsymbol{a} {\cdot} \boldsymbol{b}.$$ \noindent Hence using

$$  |\boldsymbol{a}  {\cdot} \boldsymbol{b}|^2 \leq |\boldsymbol{a}|^2  |\boldsymbol{b}|^2 \ , \ \ \  |\boldsymbol{a} \times \boldsymbol{b}|^2 \leq |\boldsymbol{a}|^2  |\boldsymbol{b}|^2 , $$ \noindent one obtains \begin{eqnarray}\label{}
 \f{ [A,B]_q }^2 \leq  4 (1+q^2) | \boldsymbol{a}|^2 |\boldsymbol{b}|^2 = (1+q^2) \f{A}^2 \f{B}^2 . 
\end{eqnarray}
\end{Remark}

\begin{Remark}
Note that in the special case when $\tr{A}=0$ and $\rk A = 1$, then the conjecture is satisfied.  Indeed, let $\left\{ \ket{e_i} \right\}_{i=1}^n$ be the standard basis for $\C^n$. When $\tr{A}=0$ and $\rk A = 1$, we can employ a unitary similarity transformation, so that its singular value decomposition is then given in the following form: $A  = a  \ketbra{e_1}{e_2} $, where $a \ge 0$. Hence $A$ is now a traceless matrix, and its non-zero matrix elements are limited to the upper left block of size $2 \times 2$. By following the same steps using the proof of Proposition \ref{prop:n_2}, the result is proved.
\end{Remark}


When $n \ge 3$, Conjecture \ref{CON} is supported with numerical analysis: assuming $\tr{A}=0$, we numerically maximized $\f{ [A,B]_q}^2/\left(\f{A}^2 \f{B}^2\right)$. The plots are shown in Fig.~\ref{fig_TLBdd} for $n=2,3,4,5$. As can be seen the maximum values of $\f{ [A,B]_q}^2/\left(\f{A}^2 \f{B}^2\right)$ lie on the curve $1+q^2$. We performed numerical optimization for the cases upto $n=10$, and these numerics agree with the bound put forth by Conjecture \ref{CON}. 


\begin{figure}[!t]
\centering
 \includegraphics[width=\textwidth]{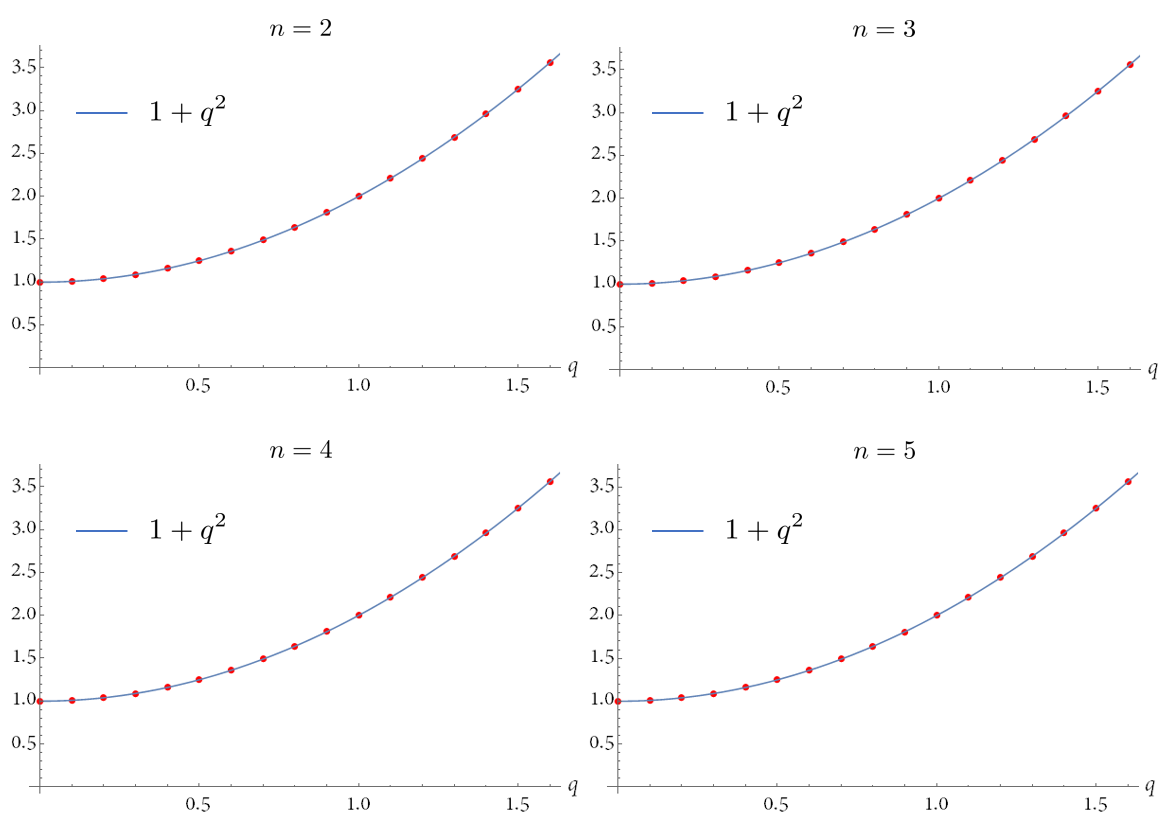}
\caption{Numerical optimization of the bound for $n=2,3,4,5$ for positive $q$: we numerically maximized $\f{ [A,B]_q}^2/\left(\f{A}^2 \f{B}^2\right)$ for traceless $A$. It is seen that the maximum (red points) lies on the curve $1+q^2$ (solid line). 
}\label{fig_TLBdd}
\end{figure}

When $q<0$, it seems that the bound will depend on $n$. For $n=2$, the sharp bound is given by \eqref{eq:q_commutator_BW} as is proved above (note that the signature of $q$ is inessential in the proof of Conjecture \ref{CON} for $n=2$). However, for general $n$, we have the following counter examples. 
 
Let $A$ and $B$ be $n \times n$ trace-less diagonal matrices defined by
\[
A = B = {\rm diag} [n-1, -1, \cdots, -1].
\]
Then,
\[
\|A\|^2 =\|B\|^2= (n-1)^2 +(-1)^2 (n-1) = n(n-1).
\]
 {Also note that } 
\[
AB = {\rm diag} [(n-1)^2, 1, \cdots, 1],
\]
and therefore,
\[
\f{[A,B]_q}^2 = (1-q)^2\f{AB}^2 = \left(1-q\right)^2 ((n-1)^4 + (n-1)) = \left(1-q\right)^2 n(n-1)(n^2 -3n +3).
\]
This implies 
\begin{equation}
    \label{eq:rem_3_1}
\frac{\f{[A,B]_q}^2}{\f{A}^2 \f{B}^2} = g(n) \left(1-q\right)^2,
\end{equation}
where 
\begin{equation}\label{eq:gn}
g(n) = \frac{n^2-3n+3}{n(n-1)}. 
\end{equation}

For $n=2,3$, $g(n) =\frac{1}{2}$. Since $\frac{\left(1-q\right)^2}{2}  \le  \left(1+q^2\right)$ for negative $q$, inequality \eqref{eq:q_commutator_BW} isn't violated. For $n\ge4$, $g(n)>\frac{1}{2}$,
and hence there always exist values of $q<0$, for which inequality \eqref{eq:q_commutator_BW} is violated. Specifically, let 
\begin{subequations}\label{eq:qminmax}
\begin{eqnarray}
q_{\min}(n) &=& \frac{-n^2 (k(n)+1) + n (k(n)+3) -3}{2 n-3}, \\
q_{\max}(n) &=& \frac{n^2 (k(n)-1)- n(k(n)-3)-3}{2 n-3}, 
\end{eqnarray}
\end{subequations}
where
$$
k(n) = \sqrt{\frac{(n-2)(n-3)}{n(n-1)}}. 
$$
Then, for any $q \in [q_{\min}, q_{\max}]$, inequality \eqref{eq:q_commutator_BW} is violated.

With this counter example in mind, we conjecture:
\begin{Conj}\label{CON:Trnq}
For any $q\le 0$, if $A$ or $B$ is traceless, the sharp bound is 
\begin{equation}\label{eq:TLBdd}
{\f{[A,B]_q}^2}  \ \leq \ \max[g(n)(1-q)^2, 1+q^2]  \ \f{A}^2 \ \f{B}^2,
\end{equation}
where $g(n)$ is given by \eqref{eq:gn}.  
\end{Conj}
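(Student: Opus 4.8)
The plan is to treat the two halves of the disjunction separately and keep the larger bound. Consider first $\tr A = 0$ with $B$ free. By the vectorization in Eq.~\eqref{eq:M}, $\f{[A,B]_q}^2 = \norm{M_A\,|B\>\!\>}^2 \le \op{M_A}^2\,\f{B}^2$, so it is enough to prove $\op{M_A}^2 \le \max[g(n)(1-q)^2,\,1+q^2]\,\f{A}^2$ for every traceless $A$. For the other half, $\tr B = 0$ with $A$ free, I would use the identity $[A,B]_q = -q\,[B,A]_{1/q}$, so that $\f{[A,B]_q}^2 = q^2\,\f{[B,A]_{1/q}}^2$; since $1/q \le 0$ when $q<0$, this converts the $B$-traceless problem at parameter $q$ into the already-reduced traceless-first-slot problem at parameter $1/q$, and one checks that $q^2\max[g(n)(1-1/q)^2,1+1/q^2] = \max[g(n)(1-q)^2,1+q^2]$, so the two halves carry the \emph{same} target bound. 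The degenerate case $q=0$ is immediate from submultiplicativity of $\f{\cdot}$.

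Next I would turn $\op{M_A}^2$ into a spectral optimization. Expanding $M_A M_A^\dagger = \I\otimes AA^\dagger + q^2 (A^\dagger A)^T\otimes\I - q(\overline{A}\otimes A + A^T\otimes A^\dagger)$, the first step is to argue that the supremum of $\op{M_A}^2/\f{A}^2$ over traceless $A$ is attained at a normal $A$. Granting this, diagonalize $A=\mathrm{diag}[a_1,\dots,a_n]$; then $M_A$ is diagonal in the matrix-unit basis with entries $a_i-qa_j$, so $\op{M_A}^2 = \max_{i,j}|a_i-qa_j|^2$ and the task collapses to the finite-dimensional problem of maximizing $\max_{i,j}|a_i-qa_j|^2/\sum_k|a_k|^2$ subject to $\sum_k a_k = 0$. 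For $q\le 0$ I would expect the extremal vector to be $a=(n-1,-1,\dots,-1)$ up to scale, reproducing the diagonal configuration of Eq.~\eqref{eq:rem_3_1}; combined with the normal bound $1+q^2$ of Proposition~\ref{prop:1}, this would give the $\max[\cdot,\cdot]$.

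Sharpness is the easy direction: the diagonal pair $A=B=\mathrm{diag}[n-1,-1,\dots,-1]$ attains $g(n)(1-q)^2$ by the computation in Eq.~\eqref{eq:rem_3_1}, while the single-traceless example from the proof of Proposition~\ref{prop:1} (in which the free factor is traceless) attains $1+q^2$; whichever is larger for the given $(n,q)$ furnishes a matching lower bound consistent with the disjunctive hypothesis.

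The hard part will be twofold, and I expect the second obstacle to be decisive. First, the reduction to normal $A$ must use $q\le 0$ in an essential way, since for $q>0$ it is false (the text's $q=2$ example already beats the normal bound); the argument would have to show that the cross term $-q(\overline{A}\otimes A + A^T\otimes A^\dagger)$ cannot be amplified by non-normality when $q\le 0$. Second, and more seriously, the spectral optimization above is genuinely unconstrained in the target: because $B$ is free, $\op{M_A}$ is attained at a rank-one matrix unit, and for the traceless vector $a=(n-1,-1,\dots,-1)$ the maximizing pair is the diagonal index $(1,1)$, giving $\max_{i,j}|a_i-qa_j|^2/\sum_k|a_k|^2 = \tfrac{n-1}{n}(1-q)^2$. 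Since $\tfrac{n-1}{n} > g(n)$ for every $n\ge 3$ (their numerators over the common denominator $n(n-1)$ differ by $n-2$), this value strictly exceeds $g(n)(1-q)^2$, so the $A$-traceless reduction alone cannot deliver the stated bound; matching $g(n)$ appears to require the target matrix itself to be traceless, which is exactly the freedom the single-sided hypothesis does not remove. Pinning down this discrepancy—whether by a sharper control of the free target, or by recognizing that the one-sided supremum really equals $\max[\tfrac{n-1}{n}(1-q)^2,1+q^2]$—is where a proof of the statement exactly as worded stands or falls.
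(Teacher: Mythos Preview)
The paper offers no proof of this statement: it is presented as Conjecture~\ref{CON:Trnq}, backed only by the $n=2$ case (Proposition~\ref{prop:n_2}, whose argument is sign-independent in $q$) and by numerics (Figs.~\ref{fig:Tl23neg}--\ref{fig:Tl4neg}). Your reduction to $\op{M_A}^2\le c\,\f{A}^2$ via vectorization is exactly the device the paper uses in its second proof of Proposition~\ref{prop:n_2}, so the method is aligned; but what you produce is not a proof, and you rightly say so. More to the point, the ``second obstacle'' you isolate is not an obstacle to a proof---it is a disproof of the conjecture as worded.

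Concretely, take $n\ge 3$, $A=\mathrm{diag}[n-1,-1,\dots,-1]$ (traceless), $B=\ketbra{e_1}{e_1}$ (not traceless), and $q=-1$. Then $[A,B]_{-1}=AB+BA=2(n-1)\ketbra{e_1}{e_1}$, so
\[
\frac{\f{[A,B]_{-1}}^2}{\f{A}^2\f{B}^2}=\frac{4(n-1)^2}{n(n-1)}=\frac{4(n-1)}{n}.
\]
For $n=3$ this equals $8/3$, against the conjectured bound $\max[g(3)\cdot 4,\,2]=2$; for $n=4$ it is $3$ against $7/3$. Since the hypothesis ``$A$ or $B$ traceless'' is met, Conjecture~\ref{CON:Trnq} fails for every $n\ge 3$. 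Your guess that the one-sided supremum is at least $\max\bigl[\tfrac{n-1}{n}(1-q)^2,\,1+q^2\bigr]$ is therefore confirmed; the constant $g(n)$ only emerges when \emph{both} matrices are traceless (the paper's own sharpness witness for the $g(n)(1-q)^2$ branch takes $A=B$), which strongly suggests the numerics were run under that stronger constraint. The remaining steps in your plan---the reduction to normal $A$ and the identification of the exact one-sided constant---therefore address a different statement from the one conjectured, and remain open.
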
 
As is already mentioned above, 
$g(2)=g(3) = \frac{1}{2}$, and $\frac{\left(1-q\right)^2}{2}  \le  \left(1+q^2\right)$ for negative $q$,
hence for $n=2,3$, the bound is the same as the one for general case. However, for $n \ge 4$, there is the range $[q_{\min}, q_{\max}]$ with \eqref{eq:qminmax}, where the bound is given by
$g(n)(1-q)^2 \f{A}^2 \ \f{B}^2$.

Finally, to support this conjecture, we conducted numerical optimization for $n=2,3,4$.  In Fig.~\ref{fig:Tl23neg}, the numerical maximum (red points) lies on the $1+q^2$ curve for both cases $n=2,3$, while in Fig.~\ref{fig:Tl4neg}, one observes that the maximum (red points) lies on the curve $g(4)(1-q)^2$ if $q \in [q_{\min},q_{\max}]$ and on the curve $1+q^2$ otherwise. 

\begin{figure}[!t]
\centering
 \includegraphics[width=\textwidth]{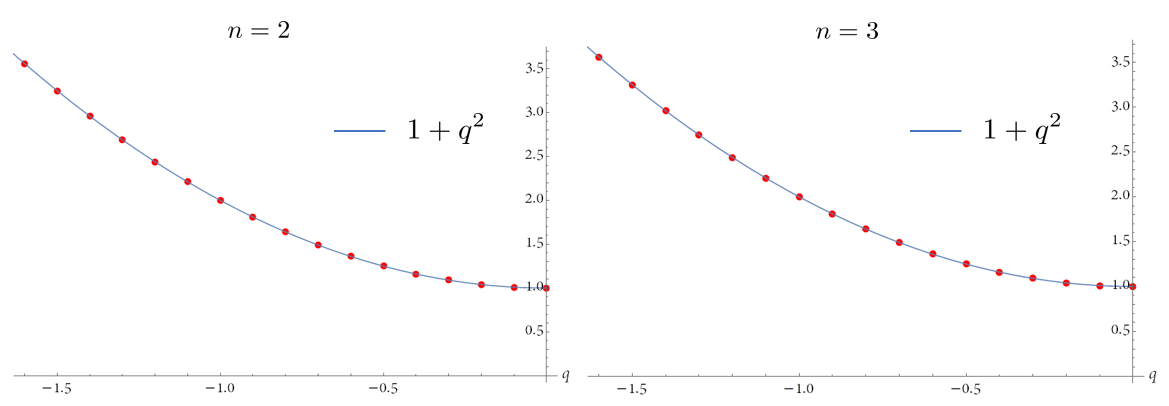}
\caption{Numerical optimization of the bound for negative $q$ ($n=2,3$): we numerically maximised $\f{ [A,B]_q}^2/\left(\f{A}^2 \f{B}^2\right)$ for traceless $A$. It is seen that the maximum (red points) lies on the curve $1+q^2$ (solid line). }\label{fig:Tl23neg}
\end{figure}

\begin{figure}[!t]
\centering
 \includegraphics[width=0.6\textwidth]{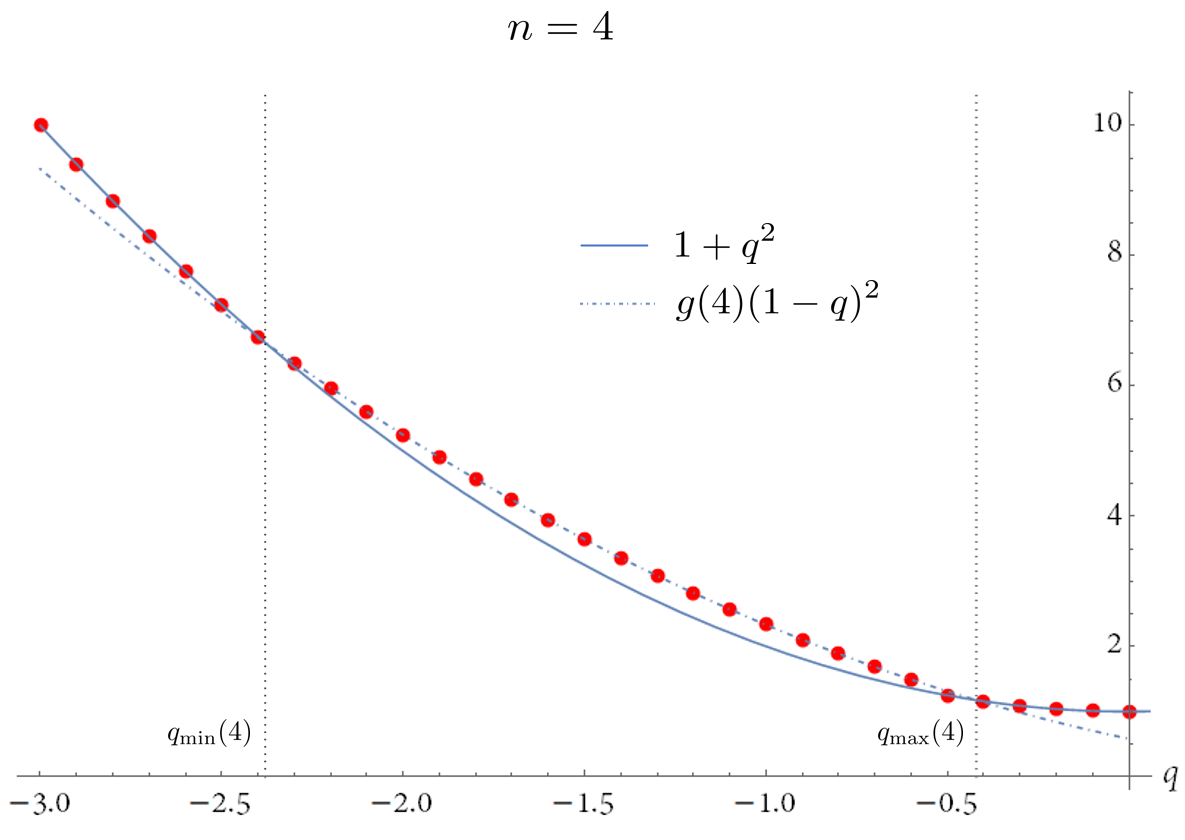}
\caption{Numerical optimization of the bound for negative $q$ ($n=4$): we numerically maximised $\f{ [A,B]_q}^2/\left(\f{A}^2 \f{B}^2\right)$ for traceless $A$. The maximum (red points) lies on the curve $g(4)(1-q)^2$ if $q \in [q_{\min},q_{\max}]$ and on the curve $1+q^2$ otherwise, which contrasts with the case when $n=2,3$.
}\label{fig:Tl4neg}
\end{figure}

\section{Conclusion}

In this paper, we considered the generalization of BW inequality with $q$-deformed commutator in two scenarios. 
In the first scenario, the bound for general matrices is investigated: for $q<0$, the sharp bound is trivially given by \eqref{eq:Neg}. 
For positive $q$, the sharp bound with normal matrices is given by \eqref{eq:q_commutator_BW}. 
However, it seems to be far from trivial to give an upper bound for the general case.  
In the second scenario, we study the bound under the constraint that one of the matrices is traceless, and this constraint is founded on the idea that the trace of the participating matrices is irrelevant in the BW-inequality. For both positive and negative $q$, we give conjectures \ref{con:TLpos} and \ref{CON:Trnq} which are numerically supported and are proved for $n=2$. 
\section*{Acknowledgements}
D.C. was supported by the Polish National Science Centre Project No. 2018/30/A/ST2/00837. G. K. was supported in part by JSPS KAKENHI Grants No. 17K18107.

\FloatBarrier

\end{document}